\newtheorem{theorem}{Theorem}[section]
\newtheorem{remark}[theorem]{Remark}
\newtheorem{lemma}[theorem]{Lemma}
\newtheorem{conjecture}[theorem]{Conjecture}
\newtheorem{question}{Question}
\title{Matchings in the hypercube with specified edges}
\author{Joshua Erde\thanks{School of Mathematics, University of Birmingham, UK. Email: {\tt j.erde@bham.ac.uk}.}}
\begin{document}

\maketitle
\begin{abstract}
Given a matching $M$ in the hypercube $Q^n$, the \emph{profile} of $M$ is the vector $\bm{x}=(x_1,\ldots, x_n) \in \mathbb{N}^n$ such that $M$ contains $x_i$ edges whose endpoints differ in the $i$th coordinate. If $M$ is a perfect matching, then it is clear that $||\bm{x}||_1 = 2^{n-1}$ and it is easy to show that each $x_i$ must be even. Verifying a special case of a conjecture of Balister, Gy{\H{o}}ri, and Schelp, we show that these conditions are also sufficient.
\end{abstract}
\section{Introduction}
Balister, Gy{\H{o}}ri, and Schelp \cite{BGS11} considered the following partitioning problem: we are given a sequence $\bm{d_1},\ldots, \bm{d_{2^{n-1}}}$ of elements of $\mathbb{F}_2^n$ and our aim is to partition $\mathbb{F}_2^n$ into pairs $\{\bm{a_i},\bm{b_i}\}$ such that $\bm{a_i} - \bm{b_i} = \bm{d_i}$ for each $i \in \left[2^{n-1}\right]$. They noted that, since we are working over $\mathbb{F}_2^n$, for such a partition
\begin{equation}\label{e:null}
\sum_{i=1}^{2^{n-1}} \bm{d_i} = \sum_{i=1}^{2^{n-1}}  \bm{a_i} - \bm{b_i} =   \sum_{i=1}^{2^{n-1}}  \bm{a_i} + \bm{b_i} = \sum_{\bm{v} \in \mathbb{F}_2^n} \bm{v} = \bm{0}.
\end{equation}
They conjectured that this necessary condition was also sufficient.
\begin{conjecture}[Balister, Gy{\H{o}}ri, and Schelp]\label{c:BGS}
Let $n \geq 2$ and let $\bm{d_1},\ldots, \bm{d_{2^{n-1}}}$ be elements of $\mathbb{F}_2^n$ such that $\sum_{i=1}^{2^{n-1}} \bm{d_i} = \bm{0}$. Then there exists a partition of $\mathbb{F}_2^n$ into pairs $\{\bm{a_i},\bm{b_i}\}$ such that $\bm{a_i} - \bm{b_i} = \bm{d_i}$ for each $i \in \left[2^{n-1}\right]$.
\end{conjecture}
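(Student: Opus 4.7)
My plan is to attempt Conjecture \ref{c:BGS} by strong induction on $n$, via a hyperplane split. The base case $n=2$ is immediate: the sum condition forces $\bm{d_1} = \bm{d_2} \neq \bm{0}$, and each nonzero element of $\mathbb{F}_2^2$ yields an obvious pairing of the four vertices into two edges of the required common difference.

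For the inductive step, fix (say) the $n$-th coordinate and split the multiset $D = \{\bm{d_1}, \dots, \bm{d_{2^{n-1}}}\}$ as $D_0 \sqcup D_1$ according to this coordinate's value. The sum condition in the $n$-th coordinate forces $|D_1|$ to be even, and since $2^{n-1}$ is even for $n \geq 2$, $|D_0|$ is also even. The pairs coming from $D_0$ must lie inside one of the two affine hyperplanes $H_0, H_1 \cong \mathbb{F}_2^{n-1}$, while the pairs from $D_1$ must cross between them. The first sub-step is a \emph{balanced zero-sum decomposition}: find $D_0 = D_0^0 \sqcup D_0^1$ with $|D_0^0| = |D_0^1| = |D_0|/2$ and $\sum_{\bm{d} \in D_0^j} \bm{d}' = \bm{0}$ in $\mathbb{F}_2^{n-1}$ (where $\bm{d}'$ denotes $\bm{d}$ with the $n$-th coordinate dropped). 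The two partial sums necessarily agree (they sum to $\sum_{\bm{d}\in D_0} \bm{d}'$, which equals $\sum_{\bm{d}\in D_1}\bm{d}'$ by the global sum condition), so one needs a combinatorial argument — in the style of Erd\H{o}s--Ginzburg--Ziv over $\mathbb{F}_2^{n-1}$, or a pigeonhole/character-sum count — to show that the common value can be taken to be $\bm{0}$. Granting this, the inductive hypothesis applied to $(D_0^0, H_0)$ and $(D_0^1, H_1)$ yields internal pairings using $|D_0|$ vertices in each hyperplane.

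The remaining task is to match the leftover sets $L_0 \subseteq H_0$ and $L_1 \subseteq H_1$, each of size $|D_1|$, by a bijection $\phi \colon L_0 \to L_1$ whose multiset of differences $\{\phi(\bm{v})-\bm{v} : \bm{v} \in L_0\}$ equals $D_1'$. This is a \emph{bipartite} analogue of the original conjecture, and the induction as stated controls neither the identity of $L_0, L_1$ nor this bipartite matching: this is the step I expect to be the main obstacle. My plan for handling it is to strengthen the induction to a two-pronged statement including a bipartite variant — roughly, that for any $A, B \subseteq \mathbb{F}_2^{m}$ of equal cardinality satisfying an appropriate sum compatibility, any prescribed multiset of $A\to B$ differences summing to $\sum_{\bm{a}\in A}\bm{a} + \sum_{\bm{b}\in B}\bm{b}$ can be realised. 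Proving this bipartite strengthening (and coordinating the choice of $D_0^0$ in the previous step with it, so that the resulting $L_0, L_1$ are compatible) is where I expect genuinely new ideas to be needed; the natural tools to try are a Hall-type deficiency argument on the bipartite multigraph whose colour classes are the perfect matchings $M_{\bm{d}'}$ for $\bm{d}' \in D_1'$, or an exchange / augmenting-pair argument that starts from an approximately-correct matching and repeatedly rewires two pairs whose differences disagree with the target profile to restore balance.
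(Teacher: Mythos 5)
You are attempting Conjecture \ref{c:BGS} itself, which the paper does \emph{not} prove: it is open, and the paper establishes only the special case where every $\bm{d_i}$ is a unit vector (Theorem \ref{t:perfect}), via an entirely different route (reformulation as matching profiles in $Q^n$, the doubling construction of Lemma \ref{l:admissible}, the domination argument of Lemma \ref{l:first}, and a finite case check). So the question is whether your sketch closes the full conjecture, and it does not; you candidly flag the two main gaps yourself, but both are more serious than missing details, and one intermediate step is false as stated. (A small quibble first: even your base case silently assumes $\bm{d_i} \neq \bm{0}$, which the statement as reproduced does not guarantee; with $\bm{d_1}=\bm{d_2}=\bm{0}$ no partition into pairs exists, so the intended formulation must exclude zero differences.)

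Concretely: first, the balanced zero-sum decomposition of $D_0$ is impossible in general. Over $\mathbb{F}_2$, if $\sum_{\bm{d}\in D_0^0}\bm{d}' = \sum_{\bm{d}\in D_0^1}\bm{d}' = \bm{0}$ then necessarily $\sum_{\bm{d}\in D_0}\bm{d}' = \bm{0}$, but the global condition only forces $\sum_{\bm{d}\in D_0}\bm{d}' = \sum_{\bm{d}\in D_1}\bm{d}'$, which can be any vector: for $n=3$ take $D = \{(1,0,0),(0,1,0),(1,0,1),(0,1,1)\}$, whose sum is $\bm{0}$ but whose $D_0$-projections $\{(1,0),(0,1)\}$ admit no zero-sum split. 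No Erd\H{o}s--Ginzburg--Ziv or pigeonhole argument can rescue this; the imbalance must be absorbed by the crossing pairs, which entangles the two sub-steps you hoped to separate. Second, even when $\sum_{\bm{d}\in D_0}\bm{d}'=\bm{0}$, the inductive hypothesis concerns \emph{perfect} partitions using exactly $2^{n-2}$ differences, whereas you need a \emph{partial} pairing realizing $|D_0|/2 < 2^{n-2}$ differences together with control over which vertices remain uncovered; the conjecture supplies neither (and its partial analogue is itself open even for unit vectors, cf.\ Conjecture \ref{c:admissible}). Third, your bipartite strengthening is false with sum compatibility alone: take $A = B = \{\bm{0},\bm{e_1}\} \subseteq \mathbb{F}_2^2$ and prescribe differences $\{\bm{e_2},\bm{e_2}\}$; both sides sum to $\bm{0}$, yet the only two bijections realize $\{\bm{0},\bm{0}\}$ or $\{\bm{e_1},\bm{e_1}\}$, so Hall-type or exchange arguments would be asked to prove a false statement, and genuinely new hypotheses linking $D_1'$ to the geometry of $L_0,L_1$ are needed. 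It is telling that the one case where your recursion is clean --- all crossing differences equal, so $D_1'$ is forced --- is exactly the degenerate split that powers the paper's Lemma \ref{l:admissible}, while the opposite extreme $D_1 = \emptyset$ (all $\bm{d_i}$ in a fixed hyperplane) is already a nontrivial theorem of Hollmann, Khathuria, Riet, and Skachek cited in the paper, not a lemma one should expect to get by counting.
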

\noindent Similar problems of partitioning groups into pairs with prescribed differences have also been considered in $\mathbb{F}_p$ for $p>2$ prime \cite{KS10,PM09}, $\mathbb{F}_p^n$ \cite{KP12} and cyclic groups \cite{KS16}.

Balister, Gy{\H{o}}ri, and Schelp \cite{BGS11} showed that Conjecture \ref{c:BGS} holds under certain assumptions on the differences $\bm{d_i}$: namely if the first half of the differences are identical, and the second half come in identical pairs. More recently, Kov{\'a}cs \cite{K23,K23a} gave further evidence towards Conjecture \ref{c:BGS}.
\begin{theorem}[Kov{\'a}cs]\label{t:kovacs}
If either
\begin{enumerate}[(a)]
    \item\label{i:one} the number of distinct $\bm{d}_i$ is at most $n-2\log_2 n -1$; or
    \item\label{i:two} $n$ is sufficiently large and at least a $\frac{28}{29}$ proportion of the $\bm{d}_i$ are identical,
\end{enumerate}
then Conjecture \ref{c:BGS} holds
\end{theorem}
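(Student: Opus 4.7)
The plan is to handle the two parts with rather different techniques.

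For part (\ref{i:one}), I would reduce along the span of the differences. Let $V = \mathrm{span}_{\mathbb{F}_2}(\bm{d}_1,\ldots,\bm{d}_{2^{n-1}})$, so that $k := \dim V$ is at most the number of distinct $\bm{d}_i$ and hence at most $n-2\log_2 n-1$. Since every $\bm{d}_i$ lies in $V$, each required pair must lie in a single coset of $V$, and the problem splits into $2^{n-k}\geq 2n^2$ independent sub-instances of the conjecture on the $k$-dimensional space $V$, one per coset. The remaining task is to distribute the multiset $\{\bm{d}_1,\ldots,\bm{d}_{2^{n-1}}\}$ among the cosets so that each receives exactly $2^{k-1}$ differences summing to $\bm{0}$ (the coset-wise analogue of (\ref{e:null})) and each sub-instance is solvable; the abundance of cosets relative to the small number of distinct differences should leave enough room for a greedy or probabilistic argument to produce such a distribution.

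For part (\ref{i:two}), let $\bm{d}^*$ be the dominant difference, $D$ the multiset of the $\leq \tfrac{1}{29}\cdot 2^{n-1}$ exceptional ones, and $H$ a hyperplane with $\bm{d}^*\notin H$. I would start from the canonical matching $M_0=\{\{\bm{u},\bm{u}+\bm{d}^*\}:\bm{u}\in H\}$, in which every edge has difference $\bm{d}^*$. A \emph{switch} on a four-vertex set $\{\bm{v},\bm{v}+\bm{d},\bm{v}+\bm{d}^*,\bm{v}+\bm{d}+\bm{d}^*\}$ deletes the two $\bm{d}^*$-edges of $M_0$ it contains and replaces them with two $\bm{d}$-edges, moving $M_0$ one step closer to the target profile. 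Pairing up the elements of $D$ (with minor corrections when $|D|$ or an individual multiplicity is odd), the problem reduces to finding $\lceil|D|/2\rceil$ pairwise vertex-disjoint switch gadgets of the prescribed types. I would set this up as a hypergraph matching problem and argue, via a first-moment or Hall-type analysis, that the threshold $\tfrac{28}{29}$ is exactly what allows such a disjoint family to be extracted.

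The main obstacle in part (\ref{i:one}) is the distribution step: partitioning the differences among cosets with the correct sizes and zero sums, while ensuring each induced sub-instance is itself solvable, is not automatic — the $2\log_2 n$ slack presumably gets consumed here, perhaps via an induction on $n$ carrying a strengthened hypothesis. In part (\ref{i:two}), the sharp difficulty is the constant $\tfrac{28}{29}$ itself: each switch consumes not only four vertices but also two of the $\bm{d}^*$-edges that might have been useful for a later switch, so an honest accounting of the trade-off between switch density and the remaining $\bm{d}^*$-structure is what should pin down the precise proportion.
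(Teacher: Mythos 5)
A preliminary point before assessing the details: this statement is not proved in the paper at all. Theorem \ref{t:kovacs} is Kov\'{a}cs' result, quoted from \cite{K23,K23a} purely as background motivation, and the paper's own contribution (Theorem \ref{t:perfect}) concerns the different special case where every $\bm{d_i}$ is a unit vector. So there is no proof in the paper to compare yours against, and your attempt must be judged on its own terms. On those terms it is a programme rather than a proof, and each half has a concrete gap. For part \eqref{i:one}, the coset reduction does not close on itself: writing $k = \dim V$, each sub-instance you create is a full instance of Conjecture \ref{c:BGS} in dimension $k$, which is open in general, and the hypothesis of part \eqref{i:one} need not survive the reduction, since the distinct differences may span $V$ and hence number up to $k$, far more than the $k - 2\log_2 k - 1$ an inductive application would require. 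The per-coset zero-sum condition you impose is necessary but not known to be sufficient, so no greedy or probabilistic distribution of the differences among cosets can finish the argument; you would simply be left holding $2^{n-k}$ copies of the problem you started with. You acknowledge this obstacle yourself, but acknowledging it does not supply the missing mechanism, and that mechanism is the entire content of the theorem.

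For part \eqref{i:two}, the gadget you propose provably cannot reach all admissible targets. Each switch replaces two $\bm{d^*}$-edges by two $\bm{d}$-edges, so any family of vertex-disjoint switches produces every non-dominant difference with \emph{even} multiplicity, whereas odd multiplicities occur in legitimate instances: for example $\bm{d_1}=\bm{e_1}$, $\bm{d_2}=\bm{e_2}$, $\bm{d_3}=\bm{e_1}+\bm{e_2}+\bm{e_3}$ and all remaining $2^{n-1}-3$ differences equal to $\bm{d^*}=\bm{e_3}$ satisfies \eqref{e:null} and the $\frac{28}{29}$ hypothesis for large $n$. Handling such instances requires longer alternating structures --- here a $6$-cycle whose non-matching edges have differences $\bm{d_1},\bm{d_2},\bm{d_3}$ and whose matching edges have difference $\bm{d^*}$, which closes up precisely because $\bm{d_1}+\bm{d_2}+\bm{d_3}+\bm{d^*}=\bm{0}$ --- and it is exactly in building and disjointly packing these longer gadgets that the zero-sum condition and the constant $\frac{28}{29}$ must do their work. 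Dismissing this as ``minor corrections when a multiplicity is odd'' hides the heart of the proof, and no actual extraction argument (first-moment, Hall-type, or otherwise) is given even in the all-even case. Your closing suggestion that $\frac{28}{29}$ is a sharp threshold for such an extraction is also unfounded: if Conjecture \ref{c:BGS} is true, no threshold below $1$ is sharp, and the constant is presumably an artifact of Kov\'{a}cs' method rather than a natural barrier.
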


In this note we consider another natural restriction of Conjecture \ref{c:BGS}, where we insist that each difference $\bm{d_i}$ must be some unit vector, i.e., of the form $\bm{e_i}$ for some $i$. We note that, since there are precisely $n$ unit vectors, this is not quite covered by Theorem \ref{t:kovacs} \eqref{i:one}. In this case, Conjecture \ref{c:BGS} has a simple reformulation in terms of matchings in the hypercube $Q^n$.

Given a matching $M \subseteq Q^n$ we say the \emph{profile} of $M$ is the vector $\bm{x}=(x_1,\ldots, x_n) \in \mathbb{N}^n$ such that $M$ contains $x_i$ edges whose endpoints differ in the $i$th coordinate, and we are interested in which tuples $\bm{x} \in \mathbb{N}^n$ are achievable as the profile of some matching. We call such tuples \emph{admissible}. In this case, the obvious necessary condition for a tuple to be the profile of a perfect matching is that each unit vector appears an even number of times as a matching edge, i.e., each $x_i$ is even, in which case we say that $\bm{x}$ is \emph{even}, and Conjecture \ref{c:BGS} would imply that this condition is also sufficient. The aim of this short note is to verify the conjecture in this case. Let us call a tuple $\bm{x} \in \mathbb{N}^n$  \emph{perfect} if $||\bm{x}||_1 := \sum_i x_i = 2^{n-1}$.

\begin{theorem}\label{t:perfect}
Let $n \geq 2$. A perfect tuple $\bm{x} \in \mathbb{N}^n$ is admissible if and only if it is even.
\end{theorem}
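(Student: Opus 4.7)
I will prove the theorem by induction on $n$. Necessity is a routine parity argument: for each coordinate $i$, the $2^{n-1}$ vertices with $i$-th coordinate equal to $0$ are covered either by the $x_i$ direction-$i$ edges of $M$ (contributing one endpoint each) or by non-direction-$i$ edges of $M$ lying entirely within this side (contributing two), forcing $x_i \equiv 2^{n-1} \equiv 0 \pmod 2$.

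For sufficiency, the base case $n = 2$ is immediate, since the only even perfect tuples are $(2,0)$ and $(0,2)$, each realised by a perfect matching of $Q^2 \cong C_4$. For the inductive step, let $\bm{x} \in \mathbb{N}^n$ be even and perfect. Consider first the case in which some $x_i = 0$, say $x_n = 0$. Any matching of profile $\bm{x}$ then uses no direction-$n$ edges and so decomposes into perfect matchings of the two copies $Q^{n-1}_0, Q^{n-1}_1$ obtained by splitting $Q^n$ along direction $n$, with profiles $\bm{y}, \bm{z} \in \mathbb{N}^{n-1}$ summing to $(x_1, \ldots, x_{n-1})$ and each of $\ell_1$-norm $2^{n-2}$. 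A simple greedy argument (picking $y_i$ as the largest even number at most $\min(x_i, 2^{n-2} - \sum_{k<i} y_k)$) shows that such a split with both $\bm{y}$ and $\bm{z}$ even always exists, and applying the inductive hypothesis to each half completes this case.

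The genuine difficulty lies in the remaining case, where every $x_i \geq 2$. My plan is to reduce this to the previous case by a local modification. Fix some $j < n$ and consider $\bm{x}' := \bm{x} + 2\bm{e_j} - 2\bm{e_n}$, which is still even and perfect but has $x'_n = x_n - 2$. Iterating this shift (if $x_n > 2$) eventually yields a tuple with $x_n = 0$, admissible by the previous case; a sequence of 4-cycle swaps in coordinates $\{j,n\}$, each replacing two direction-$j$ edges by two direction-$n$ edges, should then walk the resulting matching back to one of profile $\bm{x}$.

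I expect the main obstacle to be ensuring that each required 4-cycle swap is actually available in the current matching: one needs two direction-$j$ edges whose endpoints form a 4-cycle with direction $n$, and this is not guaranteed by the bare inductive hypothesis. To circumvent this I anticipate either strengthening the inductive statement (for example, allowing one to prescribe in advance a specific direction-$j$ edge of the matching) or constructing the matching in the hard case more directly: partitioning $Q^n$ into subcubes whose dimensions are read off from the binary expansions of $x_1/2, \ldots, x_n/2$, assigning each subcube one of its free coordinate directions, and placing within it the corresponding single-direction perfect matching. The feasibility of realising an arbitrary such dimension-and-direction profile by a genuine subcube partition of $Q^n$ is the key combinatorial fact that would make this alternative route work.
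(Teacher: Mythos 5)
Your necessity argument is correct, and so is your treatment of the case where some $x_i=0$: the greedy split of $(x_1,\dots,x_{n-1})$ into two even tuples of $\ell_1$-norm $2^{n-2}$ works exactly as you say, because every quantity involved stays even. The genuine gap is that the remaining case, all $x_i\geq 2$, is precisely where the content of the theorem lies, and neither of your two sketches closes it; you flag this yourself, but flagging it does not repair it. For the swapping route: after shifting to $\bm{x}+x_n(\bm{e_j}-\bm{e_n})$ and building a matching by splitting $Q^n$ along direction $n$, a $4$-cycle swap in coordinates $\{j,n\}$ requires a direction-$j$ edge of the matching in copy $0$ and the direction-$j$ edge \emph{at the same position} in copy $1$. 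The two halves come from an inductive hypothesis that gives no control whatsoever over where their direction-$j$ edges sit, so the existence of even one aligned pair (let alone the $x_n/2$ successive swaps you need) is unproven, and proving it would require strengthening the induction --- which is exactly the step you defer. The same holds for your alternative: that $Q^n$ admits a partition into subcubes of dimensions read off from the binary expansions, each with a validly assigned direction, is a nontrivial combinatorial claim stated but not established. Note that the smallest genuinely hard tuples, $(2,2,2,2)$ and $(2,6,6,6,6,6)$, fall squarely in your unresolved case, so the proposal proves the theorem only where it is easy.

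For comparison, the paper avoids this difficulty by inducting \emph{upwards} rather than splitting downwards. Its key lemma is the reverse of your splitting step: if $\bm{y}$ is admissible in $Q^n$, then taking two parallel copies of a matching realising $\bm{y}$ and joining the uncovered vertices across the new direction shows that $\left(2\bm{y},\,2^n-2\|\bm{y}\|_1\right)$ is a perfect admissible tuple in $Q^{n+1}$. Combined with the fact that admissibility is down-closed under the coordinatewise order $\preceq$, and a rounding lemma showing that whenever $x_{n+1}\geq 2o(\bm{x})$ the halved tuple $\left(x_1/2,\dots,x_n/2\right)$ is $\preceq$-dominated by a perfect even tuple, a minimal counterexample in $\mathbb{N}^{n+1}$ yields a smaller one whenever $x_{n+1}\geq 2n$; since $x_{n+1}\geq 2^n/(n+1)\geq 2n$ for $n+1\geq 8$, only finitely many tuples remain, and after another application of the doubling lemma just three matchings, with profiles $(1)$, $(1,1,1)$ and $(3,3,3,3,3)$, need to be exhibited by hand. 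If you want to rescue your approach, the most promising direction is the strengthened inductive statement you anticipate (prescribing the position of certain direction-$j$ edges), but as written the proposal does not contain a proof of the hard case.
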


After distributing an initial version of the manuscript it was brought to the author's attention, that Conjecture \ref{c:BGS} and related questions have also been considered independently in other contexts. Firstly, in the setting of \emph{set-sequential trees} where work of Golowich and Kim \cite{GK20} shows that Conjecture \ref{c:BGS} holds when there are at most $n$ distinct $\bm{d}_i$, which implies Theorem \ref{t:perfect}. Similar questions have also been considered in the context of \emph{batch codes} in information theory, although a subtle difference here is that a solution to a request vector does not correspond to a matching in $Q^d$, but rather a subgraph in which every vertex \emph{except} the origin has degree at most one. In this setting, Wang, Kiah and Cassuto \cite{WKC15} and Wang, Kiah, Cassuto and Bruckner \cite{WKCB17} prove a statement analogous, but not equivalent, to Theorem \ref{t:perfect} using a very similar inductive argument. More recently Hollmann, Khathuria, Riet, and Skachek \cite{HKAES23} observed that an old theorem of Hall \cite{HJR52} can be used to prove that certain codes are batch codes, and in particular a straightforward adaptation of their methods shows that 
Conjecture \ref{c:BGS} holds in the case where all the vectors $\bm{d_i}$ lie outside of some fixed hyperplane $H$ (see \cite[Remark 5.4]{HKAES23}), which also implies Theorem \ref{t:perfect}. However, since we believe the reformulation in terms of matchings in $Q^n$ leads to interesting questions (see Section \ref{s:discussion}), and our proof is short and simple, we hope the work in this note may still be of interest.

The structure of the paper is as follows. In Section \ref{s:main} we prove Theorem \ref{t:perfect} and in Section \ref{s:discussion} we discuss some related questions.

\section{Main result}\label{s:main}
We first note that by symmetry the ordering of the coordinates of $\bm{x}$ is irrelevant, and so we may assume from this point forwards that the coordinates appear in non-decreasing order, i.e., $x_1 \leq x_2 \leq \ldots \leq x_n$. Given $\bm{x},\bm{z} \in \mathbb{N}^{n}$ we say that \emph{$\bm{z}$ precedes $\bm{x}$}, which we write as $\bm{z} \preceq \bm{x}$, if $z_i \leq x_i$ for all $i \in [n]$. Clearly the set of admissible tuples is down-closed in the partial order $\preceq$. In particular, Theorem \ref{t:perfect} is equivalent to the statement that every even tuple is admissible. Finally, given $\bm{x} \in \mathbb{N}^{n}$ and $\bm{z} \in \mathbb{N}^{m}$ we will write $(\bm{x},\bm{z})$ for the vector in $\mathbb{N}^{n+m}$ obtained by concatenation $\bm{x}$ and $\bm{z}$.

The following simple observation underpins our proof.
\begin{lemma}\label{l:admissible}
If $\bm{x} \in \mathbb{N}^{n}$ is admissible and $x_{n+1} = 2^{n} - 2||\bm{x}||_1$, then $(2\bm{x},x_{n+1})$ is a perfect admissible tuple.
\end{lemma}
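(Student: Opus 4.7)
The plan is a direct constructive argument, duplicating the given matching across two copies of $Q^n$ and then pairing up the remaining vertices across the extra coordinate.

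First I would verify that $(2\bm{x}, x_{n+1})$ is perfect: since $\|(2\bm{x}, x_{n+1})\|_1 = 2\|\bm{x}\|_1 + (2^n - 2\|\bm{x}\|_1) = 2^n$, this is exactly the $\ell_1$-norm required of a perfect tuple in $\mathbb{N}^{n+1}$. Note also that $x_{n+1} \geq 0$, because any matching of $Q^n$ covers at most $2^n$ vertices, so $2\|\bm{x}\|_1 \leq 2^n$.

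Next, let $M$ be a matching in $Q^n$ with profile $\bm{x}$, and view $Q^{n+1}$ as two disjoint copies $Q^n \times \{0\}$ and $Q^n \times \{1\}$ of $Q^n$ joined by the perfect matching $\{(\bm{v},0)(\bm{v},1): \bm{v} \in \{0,1\}^n\}$ in the $(n+1)$st coordinate direction. Let $U \subseteq \{0,1\}^n$ be the set of vertices of $Q^n$ not covered by $M$, so $|U| = 2^n - 2\|\bm{x}\|_1 = x_{n+1}$.

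I would now define
\[
M' := \{(\bm{u},0)(\bm{v},0) : \bm{u}\bm{v} \in M\} \,\cup\, \{(\bm{u},1)(\bm{v},1): \bm{u}\bm{v}\in M\} \,\cup\, \{(\bm{w},0)(\bm{w},1):\bm{w}\in U\}.
\]
The first two sets together are a matching because $M$ itself is, and they use each coordinate direction $i \in [n]$ exactly $2x_i$ times. The third set is a matching because the pairs $(\bm{w},0),(\bm{w},1)$ have disjoint first coordinates, and it uses the $(n+1)$st direction $|U| = x_{n+1}$ times. The three sets are vertex-disjoint since the first two live on vertices of $Q^{n+1}$ whose $Q^n$-projection is covered by $M$, while the third lives on vertices whose projection is in $U$. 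Hence $M'$ is a matching in $Q^{n+1}$ with profile exactly $(2\bm{x}, x_{n+1})$, proving admissibility.

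There is no real obstacle here: the construction is forced once one decides to exploit the product decomposition $Q^{n+1} = Q^n \square K_2$, and the numerical identity $|U| = x_{n+1}$ is what makes the third family of edges fit exactly. The only thing to double-check carefully is the disjointness of the three edge-families, which follows immediately from the definition of $U$.
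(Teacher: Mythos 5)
Your proof is correct and is essentially identical to the paper's own argument: both duplicate the matching across the two copies of $Q^n$ in $Q^{n+1}$ and then match the uncovered vertices across the $(n+1)$st direction, using the identity $|U| = 2^n - 2\|\bm{x}\|_1 = x_{n+1}$. The only difference is that you spell out the disjointness and perfectness checks a bit more explicitly, which the paper leaves as clear.
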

\begin{proof}
By assumption, there is a matching $M_1 \subseteq Q^{n}$ with profile $\bm{x}$. We form a matching $M_2 \subseteq Q^{n+1}$ by taking disjoint two copies of $M_1$, that is,
\[
M_2 := \{\big((\bm{u},0),(\bm{v},0)\big), \big((\bm{u},1),(\bm{v},1)\big) \colon (\bm{u},\bm{v}) \in M_1 \}
\]
and note that $M_2$ has profile $
(2\bm{x},0)$.

Let $X \subseteq V\left(Q^{n}\right)$ be the set of vertices which are not covered by $M_1$, where we note that $|X| = 2^{n} - 2||\bm{x}||_1 = x_{n+1}$. We form $M_3$ by adding to $M_2$ the edges in direction $\bm{e_{n+1}}$ joining the two copies of each vertex in $X$ in $Q^{n+1}$. That is, 
\[
M_3 := M_2 \cup \{ \big((\bm{v},0), (\bm{v},1) \big) \colon \bm{v} \in X \}.
\]

It is clear that $M_3 \subseteq Q^n$ is a matching with profile $(2\bm{x}, x_{n+1})$ and since $2||\bm{x}||_1 + |X| = 2^{n}$ it follows that $(2\bm{x}, x_{n+1})$ is a  perfect admissible tuple.
\end{proof}

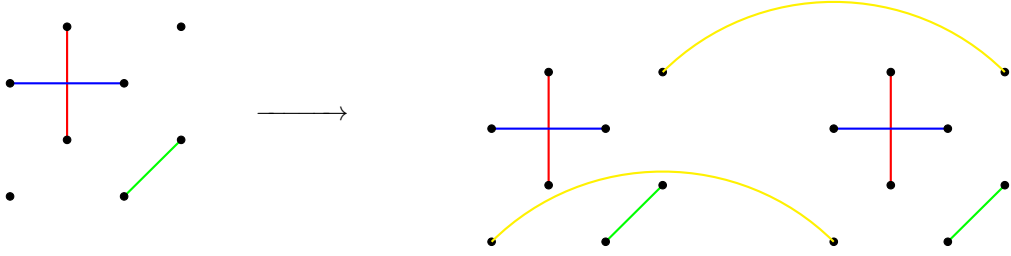
\begin{figure}[!ht]
\centering
\begin{minipage}{.2\textwidth}
\begin{tikzpicture}[scale=1.5]

\draw[red,thick] (0.5,0.5)--(0.5,1.5);
\draw[blue,thick] (0,1)--(1,1);
\draw[green,thick] (1,0)--(1.5,0.5);

 \node[shape=circle,draw=black, fill, scale=0.3] at (0,0){};
  \node[shape=circle,draw=black, fill, scale=0.3] at (1,0){};
   \node[shape=circle,draw=black, fill, scale=0.3] at (0,1){};
    \node[shape=circle,draw=black, fill, scale=0.3] at (1,1) {};
     \node[shape=circle,draw=black, fill, scale=0.3] at (0.5,0.5) {};
      \node[shape=circle,draw=black, fill, scale=0.3] at (1.5,0.5) {};
       \node[shape=circle,draw=black, fill, scale=0.3] at (0.5,1.5) {};
        \node[shape=circle,draw=black, fill, scale=0.3] at (1.5,1.5) {};
\end{tikzpicture}
\end{minipage}
\begin{minipage}{.2\textwidth}
\begin{tikzpicture}
\node at (0,0.5) {$\xrightarrow{\hspace*{1cm}}$};
\end{tikzpicture}
\end{minipage}
\begin{minipage}{.4\textwidth}
\begin{tikzpicture}[scale=1.5]

\draw[red,thick] (0.5,0.5)--(0.5,1.5);
\draw[blue,thick] (0,1)--(1,1);
\draw[green,thick] (1,0)--(1.5,0.5);

 \node[shape=circle,draw=black, fill, scale=0.3] at (0,0){};
  \node[shape=circle,draw=black, fill, scale=0.3] at (1,0){};
   \node[shape=circle,draw=black, fill, scale=0.3] at (0,1){};
    \node[shape=circle,draw=black, fill, scale=0.3] at (1,1) {};
     \node[shape=circle,draw=black, fill, scale=0.3] at (0.5,0.5) {};
      \node[shape=circle,draw=black, fill, scale=0.3] at (1.5,0.5) {};
       \node[shape=circle,draw=black, fill, scale=0.3] at (0.5,1.5) {};
        \node[shape=circle,draw=black, fill, scale=0.3] at (1.5,1.5) {};
        
\draw[red,thick] (3.5,0.5)--(3.5,1.5);
\draw[blue,thick] (3,1)--(4,1);
\draw[green,thick] (4,0)--(4.5,0.5);

 \node[shape=circle,draw=black, fill, scale=0.3] at (3,0){};
  \node[shape=circle,draw=black, fill, scale=0.3] at (4,0){};
   \node[shape=circle,draw=black, fill, scale=0.3] at (3,1){};
    \node[shape=circle,draw=black, fill, scale=0.3] at (4,1) {};
     \node[shape=circle,draw=black, fill, scale=0.3] at (3.5,0.5) {};
      \node[shape=circle,draw=black, fill, scale=0.3] at (4.5,0.5) {};
       \node[shape=circle,draw=black, fill, scale=0.3] at (3.5,1.5) {};
        \node[shape=circle,draw=black, fill, scale=0.3] at (4.5,1.5) {};

        \draw[yellow,thick] (0,0) to[out=45,in=135] (3,0) (1.5,1.5) to[out=45,in=135] (4.5,1.5);
\end{tikzpicture}
\end{minipage}
\hfill
\caption{Example showing how to build matching with profile $(2,2,2,2)$ from a matching with profile $(1,1,1)$}\label{f:3dim}
\end{figure}

Lemma \ref{l:admissible} lends itself well to an inductive argument\,---\,given a perfect even tuple $\bm{x} \in \mathbb{N}^{n+1}$, if $x_i \equiv 0 \mod 4$ for all $i \in [n]$, then $\bm{y} := \left( \frac{x_1}{2},\frac{x_2}{2},\ldots,\frac{x_n}{2}\right)$ is an even tuple in $\mathbb{N}^{n}$ and we can deduce the admissability of $\bm{x}$ from that of $\bm{y}$. However, it is not clear what to do if some of the coordinates of $\bm{y}$ are odd.

The key idea here is that, if $x_{n+1}$ is sufficiently large, then $\bm{y}$ will not be perfect, and so we might hope to find an even tuple $\bm{y'}$ which $\preceq$-dominates $\bm{y}$ with $||\bm{y'}||_1 \leq 2^{n-1}$. The admissability of $\bm{y'}$ would then imply the admissability of $\bm{y}$ and thus the admissability of $\bm{x}$.

So, for example, to show that $(0,2,4,4,6)$ is admissible, by Lemma \ref{l:first} it is sufficient to show that $(0,1,2,2)$ is admissible, however $(0,1,2,2) \preceq (2,2,2,2)$ and we know (cf.~Figure \ref{f:3dim}) that $(2,2,2,2)$ is admissible.

Let us make the preceding idea formal with the following lemma. Given $\bm{x} \in \mathbb{N}^n$, let us write $o(\bm{x})$ for the number of coordinates of $\bm{x}$ except the last which are congruent to $2 \mod 4$.

\begin{lemma}\label{l:first}
Let $n\geq 2$, and let $\bm{x} \in \mathbb{N}^{n+1}$ be even with $||\bm{x}||_1 \leq 2^{n}$ and $x_{n+1} \geq 2 o(\bm{x})$. If $\bm{y} := \left(\frac{x_1}{2},\frac{x_2}{2}, \ldots, \frac{x_n}{2}\right)$, then there exists a perfect even $\bm{y'} \in \mathbb{N}^n$ such that $\bm{y} \preceq \bm{y'}$
\end{lemma}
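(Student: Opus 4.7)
The plan is to build $\bm{y}'$ from $\bm{y}$ in two steps: first, round each odd coordinate of $\bm{y}$ up to the next even integer, and then distribute the remaining slack to reach sum exactly $2^{n-1}$.

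For the first step, define $\bm{y}'' \in \mathbb{N}^n$ by setting $y''_i = y_i + 1$ whenever $y_i$ is odd, and $y''_i = y_i$ otherwise. Since $y_i = x_i/2$ is odd precisely when $x_i \equiv 2 \pmod 4$, exactly $o(\bm{x})$ of the coordinates are incremented; hence $\bm{y}''$ is even, $\bm{y} \preceq \bm{y}''$, and
\[
||\bm{y}''||_1 \;=\; ||\bm{y}||_1 + o(\bm{x}) \;=\; \frac{||\bm{x}||_1 - x_{n+1}}{2} + o(\bm{x}).
\]
The crux of the argument is that the two hypotheses $||\bm{x}||_1 \leq 2^n$ and $x_{n+1} \geq 2o(\bm{x})$ combine to give $||\bm{y}''||_1 \leq 2^{n-1}$. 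This is exactly where the bound $x_{n+1} \geq 2 o(\bm{x})$ is used: it ensures that the mass added by rounding up odd coordinates does not exceed the slack created in $\bm{y}$ by the large coordinate $x_{n+1}$ not contributing to it.

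For the second step, I would observe that $||\bm{y}''||_1$ is even (as a sum of even integers) and that $2^{n-1}$ is even because $n \geq 2$, so the deficit $D := 2^{n-1} - ||\bm{y}''||_1$ is a non-negative even integer. Define $\bm{y}'$ by adding $D$ to a single coordinate of $\bm{y}''$ (say $y''_n$). Then $\bm{y}'$ is even, satisfies $\bm{y} \preceq \bm{y}'' \preceq \bm{y}'$, and has $||\bm{y}'||_1 = 2^{n-1}$, so $\bm{y}'$ is perfect, as required.

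The lemma is essentially a bookkeeping argument and there is no serious technical obstacle; the main task is simply to recognize that the numerical hypothesis $x_{n+1} \geq 2 o(\bm{x})$ is precisely what is needed to absorb the $o(\bm{x})$ units added in the rounding step while staying within the budget $||\bm{y}''||_1 \leq 2^{n-1}$.
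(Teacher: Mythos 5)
Your proposal is correct and follows essentially the same route as the paper: your $\bm{y}''$ is exactly the paper's $\bm{z} = \bm{y} + \bm{o}$, your crux inequality $||\bm{y}''||_1 \leq ||\bm{x}||_1/2 \leq 2^{n-1}$ is the paper's bound, and your final padding step (adding the even deficit $D$ to one coordinate) just makes explicit what the paper summarizes as ``since $2^{n-1}$ is even, there is some perfect even $\bm{y'} \succeq \bm{z}$.''
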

\begin{proof}
For each $i \in [n]$ let 
\[
o_i = \begin{cases} 1 \qquad &\text{ if } x_i \equiv 2 \mod 4,\\
0 \qquad &\text{ otherwise}.\end{cases}
\]
Let $\bm{z} = \bm{y} + \bm{o}$. Note, in particular, that $\bm{y} \preceq \bm{z}$ and that by construction $\bm{z}$ is even.

Since $||\bm{o}||_1 = o(\bm{x}) \leq \frac{x_{n+1}}{2}$, it follows that 
\[
||\bm{z}|| \leq \sum_{i=1}^n \frac{x_i}{2} + o_i \leq \frac{||\bm{x}||_1}{2} \leq 2^{n-1}.
\]
Hence, since $2^{n-1}$ is even, there is some perfect even $\bm{y'} \succeq \bm{z} \succeq \bm{y}$ as claimed.
\end{proof}

\begin{proof}[Proof of Theorem \ref{t:perfect}]
We first note that by \eqref{e:null} every perfect admissible tuple is even.

For the other direction, let us suppose towards a contradiction that the theorem does not hold, and let $\bm{x} \in \mathbb{N}^{n+1}$ be a perfect even tuple which is not admissible, with $n$ as small as possible. Let $\bm{y} = \left(\frac{x_1}{2},\frac{x_2}{2},\ldots, \frac{x_n}{2}\right)$. Note that, $\bm{x} = (2\bm{y},x_{n+1})$ and, since $\bm{x}$ is perfect, $x_{n+1} = 2^n - 2||\bm{y}||_1$. Furthremore, since $\bm{x}$ is not admissible, by Lemma \ref{l:admissible} neither is $\bm{y}$.

If $\bm{x}$ is such that $x_{n+1} \geq 2n$, then since $o(\bm{x}) \leq n$, it follows from Lemma \ref{l:first} that there exists a perfect even $\bm{y'} \succeq \bm{y}$. However, since $\bm{y}$ is not admissible, neither is $\bm{y'} \in \mathbb{N}^n$, contradicting our choice of $\bm{x}$.

Hence, we may assume that $x_{n+1} < 2n$. However, if $n+1 \geq 8$, then since $x_1\leq x_2 \leq \ldots \leq x_{n+1}$,
\begin{equation} \label{e:dim}
x_{n+1} \geq \frac{2^n}{n+1} \geq 2n,
\end{equation}
and hence we may assume that $n+1 \leq 7$.

This reduces the proof to a finite case check, and in fact more precise applications of Lemma \ref{l:first} will deal with all but three of the remaining cases. In the table below we list all perfect even tuples $\bm{x} \in \mathbb{N}^{n+1}$ with $3\leq n+1 \leq 7$ such that $x_{n+1} < 2n$. For tuples listed in blue, $x_{n+1} \geq 2o(\bm{x})$, and so by Lemma \ref{l:first} these tuples cannot be minimal counterexamples. The remaining tuples are listed in red.

\begin{center}
\begin{tabular}{ |c|c| } 
 \hline
 $n+1=3$ & \color{blue}$(0,2,2)$\color{black}  \\ 
 $n+1=4$ &  \color{blue}$(0,0,4,4)$\color{black}, \color{blue}$(0,2,2,4)$\color{black}, \color{red}$(2,2,2,2)$  \\ 
$n+1=5$ &  \color{blue}$(0,0,4,6,6)$\color{black}, \color{blue}$(0,2,2,6,6)$\color{black}, \color{blue}$(0,2,4,4,6)$\color{black},\\
& \color{blue}$(2,2,2,4,6)$\color{black}, \color{blue}$(0,4,4,4,4)$\color{black}, \color{blue}$(2,2,4,4,4)$ \color{black} \\ 
$n+1=6$ &  \color{blue}$(0,0,8,8,8,8)$\color{black},\color{blue}$(0,2,6,8,8,8)$\color{black},\color{blue}$(0,4,4,8,8,8)$\color{black},\color{blue}$(2,2,4,8,8,8)$\color{black}, \\
&\color{blue}$(0,4,6,6,8,8)$\color{black}, \color{blue}$(2,2,6,6,8,8)$\color{black},\color{blue}$(2,4,4,6,8,8)$\color{black},\\
& \color{blue}$(4,4,4,4,8,8)$\color{black},
\color{blue}$(0,6,6,6,6,8)$\color{black}, \color{blue}$(2,4,6,6,6,8)$, \\
& \color{blue}$(4,4,4,6,6,8)$\color{black},
\color{red}$(2,6,6,6,6,6)$\color{black}, \color{blue}$(4,4,6,6,6,6)$\color{black}\\ 
 $n+1=7$ &  \color{blue}$(4,10,10,10,10,10,10)\color{black}$,\color{blue}$(6,8,10,10,10,10,10)$\color{black},\color{blue}$(8,8,8,10,10,10,10)$\color{black}  \\ 
 \hline
\end{tabular}
\end{center}
So, the only three cases we have to construct by hand are $(0,2)$, $(2,2,2,2)$ and $(2,6,6,6,6,6)$. At this point it is slightly easier to apply Lemma \ref{l:admissible} to see that it remains to show that $(1)$, $(1,1,1)$ and $(3,3,3,3,1)$ are admissible, where in fact we show the stronger statement that $(3,3,3,3,3)$ is admissible.

The first is trivial. The second is only slightly less trivial and is given already in Figure \ref{f:3dim}. The third is a fun exercise and one possible solution is given in Figure \ref{f:5dim}.

\begin{figure}[!ht]
\centering
\begin{tikzpicture}

\draw[red,thick] (0,4)--(1,4) (4.5,1.5)--(5.5,1.5) (4,1)--(5,1);
\draw[blue,thick] (1,0)--(1,1) (4.5,4.5)--(4.5,5.5) (0.5,4.5)--(0.5,5.5);
\draw[green,thick] (0,1)--(0.5,1.5) (5,4)--(5.5,4.5) (5,0)--(5.5,0.5);
\draw[yellow,thick] (0.5,0.5) to[out=45,in=135] (4.5,0.5) (1,5) to[out=45,in=135] (5,5) (0,5) to[out=45,in=135] (4,5);
\draw[magenta,thick] (4,0) to[out=135,in=225] (4,4) (1.5,1.5) to[out=135,in=225] (1.5,5.5) (1.5,0.5) to[out=135,in=225] (1.5,4.5);

\node[shape=circle,draw=black, fill, scale=0.3] at (0,0){};
  \node[shape=circle,draw=black, fill, scale=0.3] at (1,0){};
   \node[shape=circle,draw=black, fill, scale=0.3] at (0,1){};
    \node[shape=circle,draw=black, fill, scale=0.3] at (1,1) {};
     \node[shape=circle,draw=black, fill, scale=0.3] at (0.5,0.5) {};
      \node[shape=circle,draw=black, fill, scale=0.3] at (1.5,0.5) {};
       \node[shape=circle,draw=black, fill, scale=0.3] at (0.5,1.5) {};
        \node[shape=circle,draw=black, fill, scale=0.3] at (1.5,1.5) {};

        \node[shape=circle,draw=black, fill, scale=0.3] at (4,0){};
  \node[shape=circle,draw=black, fill, scale=0.3] at (5,0){};
   \node[shape=circle,draw=black, fill, scale=0.3] at (4,1){};
    \node[shape=circle,draw=black, fill, scale=0.3] at (5,1) {};
     \node[shape=circle,draw=black, fill, scale=0.3] at (4.5,0.5) {};
      \node[shape=circle,draw=black, fill, scale=0.3] at (5.5,0.5) {};
       \node[shape=circle,draw=black, fill, scale=0.3] at (4.5,1.5) {};
        \node[shape=circle,draw=black, fill, scale=0.3] at (5.5,1.5) {};

        \node[shape=circle,draw=black, fill, scale=0.3] at (0,4){};
  \node[shape=circle,draw=black, fill, scale=0.3] at (1,4){};
   \node[shape=circle,draw=black, fill, scale=0.3] at (0,5){};
    \node[shape=circle,draw=black, fill, scale=0.3] at (1,5) {};
     \node[shape=circle,draw=black, fill, scale=0.3] at (0.5,4.5) {};
      \node[shape=circle,draw=black, fill, scale=0.3] at (1.5,4.5) {};
       \node[shape=circle,draw=black, fill, scale=0.3] at (0.5,5.5) {};
        \node[shape=circle,draw=black, fill, scale=0.3] at (1.5,5.5) {};

        \node[shape=circle,draw=black, fill, scale=0.3] at (4,4){};
  \node[shape=circle,draw=black, fill, scale=0.3] at (5,4){};
   \node[shape=circle,draw=black, fill, scale=0.3] at (4,5){};
    \node[shape=circle,draw=black, fill, scale=0.3] at (5,5) {};
     \node[shape=circle,draw=black, fill, scale=0.3] at (4.5,4.5) {};
      \node[shape=circle,draw=black, fill, scale=0.3] at (5.5,4.5) {};
       \node[shape=circle,draw=black, fill, scale=0.3] at (4.5,5.5) {};
        \node[shape=circle,draw=black, fill, scale=0.3] at (5.5,5.5) {};

\end{tikzpicture}
\caption{A matching in $Q^5$ with profile $(3,3,3,3,3)$.}\label{f:5dim}
\end{figure}
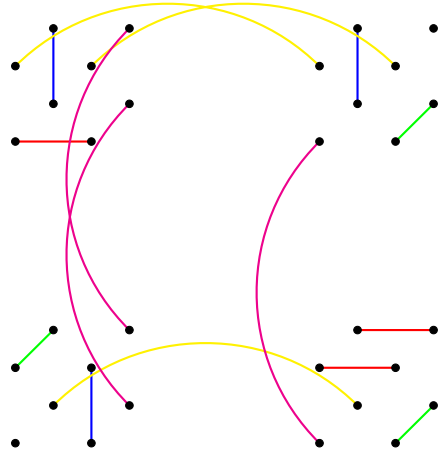
\end{proof}

\section{Discussion}\label{s:discussion}
As we noted in Section \ref{s:main}, if $\bm{x}$ is an admissible tuple and $\bm{y} \preceq \bm{x}$, then $\bm{y}$ is also admissible. In particular, Theorem \ref{t:perfect} also tells us a lot about the admissible profiles of non-perfect matchings. 

However, whilst Theorem \ref{t:perfect} fully characterises the \emph{perfect} admissible tuples, there are admissible tuples which are not $\preceq$-dominated by any perfect admissible tuples, for example two of the base cases in the proof of Theorem \ref{t:perfect}. It is relatively easy to see that Conjecture \ref{c:BGS} would imply that \emph{all} tuples $\bm{x} \in \mathbb{N}^n$ with $||\bm{x}||_1 < 2^{n-1}$ are admissible.

In fact, it was pointed out by a referee that this already follows from the work of Hollmann, Khathuria, Riet, and Skachek \cite{HKAES23}.

\begin{theorem}\label{c:admissible}
Let $n \in \mathbb{N}$. Every tuple $\bm{x} \in \mathbb{N}^n$ with $||\bm{x}||_1 < 2^{n-1}$ is admissible.
\end{theorem}
\begin{proof}
We first note that, by similar arguments as in \cite[Remark 5.4]{HKAES23}, a result of Hall \cite{HJR52} implies that Conjecture \ref{c:BGS} holds if all the vectors $\bm{d}_1,\ldots,\bm{d}^{2^{n-1}}$ have odd Hamming weight. 

Clearly it suffices to prove the statement for $\bm{x} \in \mathbb{N}^n$ with $||\bm{x}||_1 = 2^{n-1}-1$, and given any such vector $\bm{x}$ we can consider the sequence $\bm{d}_1,\ldots,\bm{d}^{2^{n-1}}$ given by $x_j$ copies of $\bm{e}_j$ for each $j \in [n]$ together with $\bm{d}_{2^{n-1}} = \sum_{i=1}^{2^{n-1}-1} \bm{d}_i$. Since each $\bm{d}_i$ with $i \in \left[2^{n-1}-1\right]$ has Hamming weight one, it follows that each $\bm{d}_i$ with $i\in \left[2^{n-1}\right]$ has odd Hamming weight, and hence there is a partition of $\mathbb{F}^n_2$ into pairs $\{\bm{a}_i,\bm{b}_i\}$ such that $\bm{a}_i - \bm{b}_i = \bm{d}_i$ for all $i \in \left[2^{n-1}\right]$. The matching $
M = \left\{ (\bm{a}_i,\bm{b}_i) \colon  \in \left[2^{n-1}-1\right]\right\}$ then shows that $\bm{x}$ is admissible.
\end{proof}


It would also be interesting to characterise which tuples are the profiles of Hamilton cycles in $Q^d$.
Since each Hamilton cycle can be split into two matchings, it is clear that any such tuple must be the sum of two perfect admissible tuples, and so in particular must also be even. Furthermore, clearly no direction can appear more than $2^{n-1}$ times, Finally, if some $x_i=0$, then all edges are either contained in the hyperplane $H_i =\{ \bm{a} \in \{0,1\}^n \colon a_i=0\}$ or its complement, which is a contradiction as a Hamilton cycle is connected and spanning. Hence, each $x_i \geq 1$ and so, since $\bm{x}$ must be even, each $x_i \geq 2$. We conjecture that these are the only restrictions.
\begin{conjecture}\label{c:hamiltonian}
Let $n \in \mathbb{N}$. Every even tuple $\bm{x} \in \mathbb{N}^n$ with $||\bm{x}||_1 = 2^n$, $\max_i \{x_i\} \leq 2^{n-1}$ and $\min_i \{x_i\} \geq 2$ is the profile of a Hamilton cycle.
\end{conjecture}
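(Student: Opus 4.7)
The plan is to emulate the two-step structure used in the proof of Theorem \ref{t:perfect}, proceeding by induction on $n$. One would develop a doubling lemma, analogous to Lemma \ref{l:admissible}, that lifts a Hamilton cycle in $Q^n$ (together with a distinguished edge set) to a Hamilton cycle in $Q^{n+1}$; and a dominating lemma, analogous to Lemma \ref{l:first}, that rewrites any target tuple $\bm{x} \in \mathbb{N}^{n+1}$ satisfying the hypotheses of the conjecture into a form reachable by the doubling operation.

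For the doubling step, given a Hamilton cycle $C \subseteq Q^n$ with profile $\bm{y}$ and a set $E$ of $k$ pairwise vertex-disjoint edges of $C$ with profile $\bm{s}$, the construction is: take two copies $C_0, C_1$ of $C$ in the halves $Q^n \times \{0\}$ and $Q^n \times \{1\}$, delete each $e \in E$ together with its mirror, and add the $2k$ cross edges in the new coordinate direction joining the endpoints of the deleted edges. The resulting $2$-regular subgraph of $Q^{n+1}$ has profile $(2\bm{y} - 2\bm{s}, 2k)$ and is a single cycle provided the surgeries are arranged correctly. A single removal ($k=1$) always yields one cycle, but iterating can split the result; overcoming this seems to require a cyclic parity/interlacement condition on the positions of $E$ within $C$, possibly together with the freedom to choose which copy is ``flipped'' at each surgery.

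For the dominating step, given a target $\bm{x} \in \mathbb{N}^{n+1}$, I would set $\bm{y} = (x_1/2,\ldots,x_n/2) + \bm{s}$ and search for $\bm{s} \in \mathbb{N}^n$ with $||\bm{s}||_1 = x_{n+1}/2$, $s_i \equiv x_i/2 \pmod{2}$, and such that $\bm{y}$ is itself an even tuple satisfying $2 \leq y_i \leq 2^{n-1}$ (the total $||\bm{y}||_1 = 2^n$ is then automatic). This is a parity-constrained polytope-feasibility question. In the bulk regime where $x_{n+1}$ is comparable to $2^{n+1}/(n+1)$, there should be enough slack to find a valid $\bm{s}$, and a pigeonhole bound of the form $\max_i x_i \geq 2^{n+1}/(n+1)$ would force $\bm{x}$ into this regime for all but bounded $n$, reducing the conjecture to a finite base-case check in low dimensions.

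The principal obstacle is the single-cycle condition in the doubling step: an arbitrary vertex-disjoint edge set of a prescribed profile need not be ``surgery-safe'', and so the conjecture may only be tractable after strengthening the inductive hypothesis to assert the existence of a Hamilton cycle together with a rich family of removable edge-subsets of prescribed profiles. This structural challenge mirrors analogous difficulties in Hamilton-decomposition arguments for the cube, and I expect it to be the technical heart of any proof of Conjecture \ref{c:hamiltonian}.
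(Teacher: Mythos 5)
First, a point of order: the statement you are addressing is Conjecture \ref{c:hamiltonian}, which the paper poses as an \emph{open problem} in its discussion section. The paper contains no proof of it; it only justifies the necessity of the hypotheses (a Hamilton cycle splits into two perfect matchings, so Theorem \ref{t:perfect} forces evenness, and the bounds $2 \leq x_i \leq 2^{n-1}$ are immediate). So there is no proof in the paper to compare yours against, and the only question is whether your proposal stands on its own as a proof. It does not: both halves of your program contain genuine gaps, only one of which you acknowledge, and that one is more severe than you indicate.

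The central gap is the doubling step, and the situation there is worse than ``iterating can split the result''. If $C \subseteq Q^n$ is a Hamilton cycle and $E$ is a set of $k$ pairwise vertex-disjoint edges of $C$, then deleting $E$ and its mirror image from the two copies $C_0, C_1$ and adding the $2k$ cross edges \emph{never} yields a single cycle when $k \geq 2$, for any choice of positions of $E$: the edges of $E$ cut $C$ into arcs $A_1, \ldots, A_k$, the only cross edges incident to the copy $A_i^0$ are the two joining its endpoints to the endpoints of its mirror $A_i^1$, and hence $A_i^0 \cup A_i^1$ together with those two cross edges closes up into a cycle by itself. The construction thus produces exactly $k$ disjoint cycles, so it works only for $k = 1$, i.e.\ $x_{n+1} = 2$, whereas your own reduction pushes $x_{n+1}$ into the regime $x_{n+1} \approx 2^{n+1}/(n+1) > 2$. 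In particular, no ``cyclic parity/interlacement condition on the positions of $E$'' can rescue the mirrored surgery; the repair must use genuinely different cycles, or differently placed edge sets, in the two copies, which is exactly why you need the strengthened inductive hypothesis you allude to --- a Hamilton cycle of profile $\bm{y}$ containing, for every relevant $\bm{s}$, a disjoint edge set of profile $\bm{s}$ whose surgery pattern reconnects into one cycle. Nothing in the proposal formulates that hypothesis precisely, verifies it in any base case, or shows it is preserved by the lifting construction; this is the same obstruction that makes constructions of Gray codes with prescribed transition counts delicate. Separately, your dominating step is asserted rather than proved: the existence of $\bm{s} \in \mathbb{N}^n$ with $||\bm{s}||_1 = x_{n+1}/2$, $s_i \equiv x_i/2 \pmod 2$ and $2 \leq x_i/2 + s_i \leq 2^{n-1}$ is a parity-constrained feasibility problem, and ``there should be enough slack'' is not an argument --- note that in the matching setting the analogous step is the short but completely explicit Lemma \ref{l:first}, and your version would need a comparably explicit verification, together with an identified and checked finite list of exceptional low-dimensional cases. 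As it stands, the proposal is a reasonable research plan whose two key lemmas are both unproven, and the first of them is false in the form stated.
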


\begin{remark}
The author has subsequently been made aware of the work in \cite{P21,P12}. In particular, it is necessary that for every $i_1,\ldots, i_k \in [n]$, $\sum_{j=1}^k x_{i_j} \geq 2^k$. Indeed, the projection of a Hamilton cycle to the subcube spanned by the coordinates $\{i_1,\ldots,i_k\}$ gives a closed walk visiting each vertex and hence require at least $2^k$ edges. Note that the two conditions $\max_i \{x_i\} \leq 2^{n-1}$ and $\min_i \{x_i\} \geq 2$ in Conjecture \ref{c:hamiltonian} are implied by this condition.

On the other hand, it is shown in \cite{P21,P12} that for sufficiently large $n$, this condition together with the fact that  $||\bm{x}||_1 = 2^n$ and that $\bm{x}$ is even is sufficient to guarantee the existence of a Hamilton cycle with profile $\bm{x}$.
\end{remark}

A different generalisation would be to consider decomposing the hypercube into $2$-dimensional faces with specified orientations. More concretely, given a decomposition $D$ of $E\left(Q^n\right)$ into $4$-cycles, the \emph{profile} of this decomposition is the weighting $w$ of $K_n$ where the weight $w(ij)$ of an edge is the number of $4$-cycles in $D$ where the antipodal points differ in coordinates $i$ and $j$. It is easy to see that \eqref{e:null} implies that for any $i$ the total number of faces with an edge in direction $i$ must be even, i.e., $\sum_{j \neq i} w(ij) = 0 \mod 2$. Indeed, if we consider the intersection of such a decomposition with the subcube whose $i$th coordinate is $0$ then we can easily construct a perfect matching with $\sum_{j \neq i} w(ij)$ edges in direction $i$. However, it is a simple exercise to check that the weighting $w$ of $K_4$ given by \[
w(ij) = \begin{cases}
    2 \qquad \text{ if } ij= 12 \text{ or } 34,\\
    0 \qquad \text{ otherwise}
    \end{cases}
    \]
is not the profile of any such decomposition, and so, at least in small dimensions, there are other obstructions.
\begin{question}
What are the admissible profiles for decompositions of $Q^n$ into $4$-cycles? 
\end{question}

Another interesting question would be to characterise the admissible profiles for matchings of the \emph{middle layer graph}, the induced subgraph $M_n$ of $Q^{2n+1}$ on the middle two layers. A similar argument as in \eqref{e:null} shows that if $\bm{x}$ is the profile of a perfect matching on $M_n$ then $x_i \equiv \binom{2n+1}{n+1} \mod 2$ for each $i\in [2n+1]$. An application of Lucas' theorem shows that these binomial coefficients are odd if and only if $n+1$ is a power of $2$.

However, there is another necessary condition beyond this parity condition and the trivial condition that $||\bm{x}||_1 = \frac{|V(M_n)|}{2}$. Indeed, since only $\binom{2n}{n}$ vertices in each layer are incident to an edge in a fixed direction,  we require that $x_i \leq \binom{2n}{n}$ for each $i \in [2n+1]$. In fact, since the $\binom{2n}{n-1}$ vertices in the lower layer which are not adjacent to an edge in direction $\bm{e_i}$ are only adjacent to the vertices in the upper layer which are adjacent to an edge in direction $\bm{e_i}$, and therefore must be matched to a subset of these vertices via edges in some direction not equal to $\bm{e_i}$, the matchings can contain at most $\binom{2n}{n} - \binom{2n}{n-1} = \binom{2n-1}{n-1}$ edges in direction $\bm{e_i}$.

For $n=1$, it is easy to verify that these two conditions, together with the trivial condition that $||\bm{x}||_1 = \frac{|V(M_n)|}{2} = \binom{2n+1}{n+1}$, are also sufficient, since $(1,1,1)$ is the only tuple satisfying these conditions.

\begin{question}
What are the admissible profiles of perfect matchings of $M_n$? 
\end{question}

Finally, another intriguing generalisation would be to look at matchings on other polytopes. For example, each of the edges in the $n$-dimensional permutahedron Perm$(n)$, embedded in $\mathbb{R}^{n+1}$ as the convex hull of all \emph{permutation vectors}, is parallel to some vector of the form $\bm{e_i} - \bm{e_j}$ with $i,j \in [n+1]$. Let us define the \emph{profile} of a matching of Perm$(n)$ to be the weighting $w$ of $K_{n+1}$ where $w(ij)$ is the number of edges parallel to the vector $\bm{e_i} - \bm{e_j}$.
\begin{question}
What are the admissible profiles of perfect matchings of Perm$(n)$?
\end{question}
Note that, since Perm$(n)$ can also be viewed as the Cayley graph of $S_{n+1}$ whose generating set is the set of adjacent transposition $S = \{ (i,i+1) \colon i \in [n]\}$, it is possible to phrase this as a question similar to Conjecture \ref{c:BGS} about the symmetric group $S_{n+1}$, where the `differences' between permutations are required to be adjacent transpositions.

In the case $n=2$ there are only $2$ matchings, each of which use an edge in each direction once. For $n=3,4$ computational evidence suggests that each direction must be used an even number of times, and that, if we view the profile $w$ as a vector in $\mathbb{R}^{\binom{n+1}{2}}$, then the set of admissible vectors are the even interior points of some convex polytope. For $n=3$, this polytope is determined by the inequalities
\begin{align*}
&w(12) + w(34) = w(13) + w(24) = w(14)+w(23)=4;\\
&w(ij) \geq 0 \text{ for all } i,j,
\end{align*}
which essentially say that each pair of orthogonal directions has to be used exactly four times in the matching. For $n=4$ we were able to calculate a list of $1080$ inequalities which determine the polytope, but it is not clear if there is a more enlightening description.

\subsection*{Acknowledgements}
The author would like to thank Maur\'{i}cio Collares for pointing out the relevance of Lucas' theorem, as well as for providing computational evidence to motivate the questions in Section \ref{s:discussion} and Thang Nguyen for bringing the references \cite{P21,P12} to his attention. The author would also like to thank the anonymous referees for their valuable suggestions and in particular for providing a proof of Theorem \ref{c:admissible}. The author was supported in part by the Austrian Science Fund (FWF) [10.55776/\text{P36131}].

\bibliographystyle{plain}
\bibliography{matching} 
\end{document}